\newtheorem{theorem}{\bf{Theorem}}[section] 
\newtheorem{lemma}[theorem]{\bf{Lemma}}     
\newtheorem{ex}[theorem]{\bf{Example}}
\newtheorem{corollary}[theorem]{\bf{Corollary}}
\newtheorem{proposition}[theorem]{\bf{Proposition}}
\newtheorem{definition}[theorem]{\bf{Definition}}
\title[ON THE EXTENDABILITY OF SOME CLASSES OF MAPS ON HILBERT $C^*$-MODULES]
 {ON THE EXTENDABILITY OF SOME CLASSES OF MAPS ON HILBERT $C^*$-MODULES}  
\subjclass[2010]{Primary: 46L08, Secondary: 46L07.}
\keywords{Hilbert $C^*$-modules, Wittsock's extension theorem, completely
positive maps, completely semi-$\phi$-maps}
 \author[M. B. Asadi]{Mohammad B. Asadi}
\address{School of Mathematics, Statistics and Computer Science,
 College of Science, University of Tehran, Tehran,
  Iran, and \\
   School of Mathematics, Institute for Research in Fundamental Sciences (IPM),
P.O. Box: 19395-5746, Tehran, Iran}
\email{mb.asadi@khayam.ut.ac.ir}
\author[R. Behmani]{Reza Behmani}
\address{\noindent Department of Mathematics, Kharazmi University, 50, Taleghani Ave.,15618, Tehran Iran}
\email{reza.behmani@gmail.com}
\author[A. R. Medghalchi]{Ali R. Medghalchi}
\address{\noindent Department of Mathematics, Kharazmi University, 50, Taleghani Ave.,15618, Tehran IRAN.}
\email{a\_medghalchi@khu.ac.ir}%
\author[H. Nikpey]{Hamed Nikpey}
\address{Department of Mathematics, Shahid Rajaei Teacher Training University, Tehran 16785-136, Iran}
\email{hamednikpey@gmail.com}
\begin{document}
\maketitle

\begin{abstract}
In this paper, we show that every completely semi-$\phi$-map on a
submodule of a Hilbert $C^*$-module has a completely
semi-$\phi$-map extension on the whole of module. We also
investigate the extendability of $\phi$-maps and provide examples
of $\phi$-maps which has no $\phi$-map extension. Finally, we
introduce a category of Hilbert $C^*$-module and determine
injective objects in this category.
\end{abstract}



\section{INTRODUCTION}

One of the most fundamental theorems in the theory of operator
spaces is the Wittsock's extension theorem for completely bounded
maps which is the noncommutative counterpart of the celebrated
Hann-Banach's Extension theorem.  The authors in \cite{ABN},
introduced the concept of completely semi-$\phi$-maps as a
generalization of $\phi$-maps. Also, they shown that every operator
valued completely bounded linear map on a Hilbert $C^*$-module is a
completely semi-$\phi$-map for some completely positive map $\phi$
on the underlying $C^*$-algebra of the Hilbert $C^*$-module and vice
versa \cite{BM}. Thus it is natural to seeking for an analogue of
Wittsock's extension theorem for completely semi-$\phi$-maps.

In this note, we show that every $\phi$-map or completely
semi-$\phi$-map on a submodule of a Hilbert $C^*$-module has a
completely semi-$\phi$-map extension on the whole of the Hilbert
$C^*$-module.
 Furthermore, we provide examples of some $\phi$-map
which has no $\phi$-map extension on the whole of module.
 However, for some special case of
$\phi$-maps, we will show to how construct a completely
semi-$\phi$-map extension of a $\phi$-map which is close to being a
$\phi$-map.

In the last section, we introduce a category of Hilbert
$C^*$-module and determine injective objects in this category.

For every Hilbert spaces $\mathcal{H,K},$ the set of all bounded
operators $\mathcal{B(H,K)}$ is a right Hilbert
$\mathcal{B(H)}$-module, where the module action is the composition
of operators and the $\mathcal{B(H)}$-inner product on
$\mathcal{B(H,K)}$ is given by $\langle T,S\rangle=T^*S$ for every
$S,T\in\mathcal{B(H,K)}.$

Assume $\mathcal{A}$ and $\mathcal{B}$ are  $C^*$-algebras,
$\mathcal{E}$ and $\mathcal{G}$ are right Hilbert $C^*$-modules over
$\mathcal{A}$ and $\mathcal{B}$ respectively,
$\phi:\mathcal{A}\rightarrow\mathcal{B}$ is a completely positive
map and $\Phi:\mathcal{E}\rightarrow\mathcal{G}$ is a map, we say

(1) $\Phi$ is a \textit{$\phi$-map}, if
$\langle\Phi(x),\Phi(y)\rangle= \phi(\langle x,y\rangle)$, for all
$x,y\in\mathcal{E}$.

(2) $\Phi$ is a \textit{$\phi$-morphism}, if $\Phi$ is a $\phi$-map
and $\phi$ is a $*$-homomorphism.

(3) $\Phi$ is a \textit{completely semi-$\phi$-map}, if
$\langle\Phi_n(x),\Phi_n(x)\rangle\leq\phi_n(\langle x,x\rangle)$
 for every $x\in\mathbb{M}_n(\mathcal{E})$ and $n\in\mathbb{N}.$

When $\mathcal{G}=\mathcal{B(H,K)}$ and $\mathcal
A=\mathcal{B(H)}$ for some Hilbert spaces $\mathcal{H,K},$ a
$\phi$-morphism is called \textit{$\phi$-representation} and in
this case

(4) $\Phi$ is \textit{non-degenerate}, if
$[\Phi(\mathcal{E})\mathcal{H}]=\mathcal{K}.$

Note that a $\phi$-morphism  $\Phi$ is linear and satisfies
$\Phi(x.a)=\Phi(x)\phi(a)$ for every $x\in\mathcal{E}$ and
$a\in\mathcal{A},$ therefore $\Phi$ is a ternary morphism (triple
morphism), that is $\Phi(x\langle
y,z\rangle)=\Phi(x)\langle\Phi(y),\Phi(z)\rangle$ for all
$x,y,z\in\mathcal{E}.$
 For more information on representation theory
of Hilbert $C^*$-modules,  $\phi$-maps and their dilation theory
refer to \cite{Arambašic}, \cite{Asadi}, \cite{BM}, \cite{BRS}, \cite{skeide} and
\cite{Skeide-Sumesh}.

\section{EXTENDABILITY OF COMPLETELY SEMI-$\phi$-MAPS}
Throughout this section, we assume that $\mathcal{H},\mathcal{H}_1,
\mathcal{H}_2$ are Hilbert spaces, $\mathcal{A}$ is a $C^*$-algebra
and $\mathcal{F}$ is a non-trivial closed submodule of a Hilbert
$\mathcal{A}$-module $\mathcal{E}$.
 Also we note that the set
$\mathcal{F}^{\perp}= \{x \in \mathcal{E} ~|~ \langle x, y\rangle
=0,~ \text{for all} ~ y \in \mathcal{F} \}$ is a  closed submodule
of  $\mathcal{E}$.

In this section, we concentrate on operator valued maps on Hilbert
$C^*$-modules. In fact, if
$\phi:\mathcal{A}\rightarrow\mathcal{B}(\mathcal{H}_{1})$ is a
completely positive map, then an \textit{operator valued $\phi$-map
on $\mathcal{F}$} as $\Phi$, means that $\Phi$ is a $\phi$-map from
$\mathcal{F}$ into $\mathcal{B}(\mathcal{H}_1)$-module
$\mathcal{B}(\mathcal{H}_1,\mathcal{H}_2)$.

We first discuss the extension problem for $\phi$-maps.
 In the following,
  we provide an example of some $\phi$-map on a submodule of a
Hilbert $C^*$-module which can not be extended to any $\phi$-map on
the whole of the  Hilbert $C^*$-module.

\begin{ex}\label {e1}
Suppose that $\mathcal{K}(\mathcal{H})$ is the set of all compact
operators on $\mathcal{H}$. Clearly, $\mathcal{E}=
\mathcal{K}(\mathcal{H}) \oplus \mathcal{K}(\mathcal{H})$ is a full
Hilbert $\mathcal{K}(\mathcal{H})$-module (by
$\mathcal{K}(\mathcal{H})$-valued inner product $\langle (T_1, S_1),
(T_2, S_2) \rangle = T_{1}^* T_2 + S_{1}^* S_2$)
 and $\mathcal{F}=
\mathcal{K}(\mathcal{H}) \oplus 0 $ is a nontrivial Hilbert
submodule of $\mathcal{E}$. Consider the inclusion map
$\phi=id:K(\mathcal{H})\rightarrow
K(\mathcal{H})\subset\mathcal{B(H)}.$ The map $\Phi: \mathcal{F} \to
\mathcal{B}(\mathcal{H}, \mathcal{H})$ defined by $\Phi((T, 0))=T$
is a $\phi$-map which doesn't  have any $\phi$-map extension on
$\mathcal{E}$.

 In fact, if $\Phi': \mathcal{E} \to
\mathcal{B}(\mathcal{H}, \mathcal{H})$ is a $\phi$-map extension of
$\Phi$,  then
 $$\Phi'( (T_1, S_1))^* \Phi'(T_2, S_2) = T_{1}^* T_2 +
S_{1}^* S_2 ~ \text{and} ~ \Phi'( (T_1, 0))= T_1,$$ for all $T_{1},
T_2, S_{1}, S_2 \in \mathcal{K}(\mathcal{H})$. A directly
calculation shows that $\Phi'( (0, S))=0$ for all $S \in
\mathcal{K}(\mathcal{H})$. Consequently, $\Phi' = \Phi \oplus 0$ and
so $\Phi'$ is not a $\phi$-map on $\mathcal{E}$
\end{ex}

The following lemma provides a necessary condition for a
completely positive map $\phi$, such that every $\phi$-map on a
submodule has a $\phi$-map extension on the whole of the Hilbert
$C^*$-module.

\begin{lemma}\label{l1}
If $\phi:\mathcal{A}\rightarrow\mathcal{B}(\mathcal{H}_{1})$ is a
completely positive map and there exists a non-degenerate $\phi$-map
$\Phi:\mathcal{F}\to\mathcal{B}(\mathcal{H}_1,\mathcal{H}_2)$  which
has a $\phi$-map extension on $\mathcal{E}$, then

(i) $\phi(\langle \mathcal{F}^{\perp},\mathcal{E}\rangle)=\{0\},$

(ii) every  operator valued $\phi$-map on $\mathcal{F}$ has a
$\phi$-map extension on $\mathcal{E}.$
\end{lemma}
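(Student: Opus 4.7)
For (i), the plan is to first upgrade the hypothesis by proving the stronger statement that the extension $\Phi'$ vanishes identically on $\mathcal{F}^\perp$; conclusion (i) will then be immediate. I would fix $x\in\mathcal{F}^\perp$ and, for each $z\in\mathcal{F}$, use that $\Phi'$ restricts to $\Phi$ on $\mathcal{F}$ and is itself a $\phi$-map to compute
\[
\Phi(z)^*\Phi'(x)=\Phi'(z)^*\Phi'(x)=\langle\Phi'(z),\Phi'(x)\rangle=\phi(\langle z,x\rangle)=0.
\]
This says $\Phi'(x)\mathcal{H}_1\perp\Phi(z)\mathcal{H}_1$ for every $z\in\mathcal{F}$. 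Invoking non-degeneracy of $\Phi$, which asserts $[\Phi(\mathcal{F})\mathcal{H}_1]=\mathcal{H}_2$, would force $\Phi'(x)=0$. Then for any $y\in\mathcal{E}$, $\phi(\langle x,y\rangle)=\langle\Phi'(x),\Phi'(y)\rangle=\Phi'(x)^*\Phi'(y)=0$, which is exactly (i).

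For (ii), the plan is to transport the given extension $\Phi'$ to a $\phi$-map extension of an arbitrary operator valued $\phi$-map $\Psi:\mathcal{F}\to\mathcal{B}(\mathcal{H}_1,\mathcal{K})$ by intertwining $\Psi$ and $\Phi$ with an isometry. Since both $\Psi$ and $\Phi$ obey the same identity $\langle\,\cdot\,,\,\cdot\,\rangle=\phi(\langle\,\cdot\,,\,\cdot\,\rangle)$ on $\mathcal{F}$, expanding norms of finite sums gives
\[
\Big\|\sum_i\Psi(x_i)h_i\Big\|^2=\sum_{i,j}\langle h_i,\phi(\langle x_i,x_j\rangle)h_j\rangle=\Big\|\sum_i\Phi(x_i)h_i\Big\|^2,
\]
so the prescription $\sum_i\Phi(x_i)h_i\mapsto\sum_i\Psi(x_i)h_i$ extends by continuity to a well-defined isometry $V:[\Phi(\mathcal{F})\mathcal{H}_1]=\mathcal{H}_2\to\mathcal{K}$. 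I would then define $\Psi':\mathcal{E}\to\mathcal{B}(\mathcal{H}_1,\mathcal{K})$ by $\Psi'(y):=V\Phi'(y)$. A short check confirms $\Psi'(x)=V\Phi(x)=\Psi(x)$ for $x\in\mathcal{F}$, and, using $V^*V=I_{\mathcal{H}_2}$,
\[
\langle\Psi'(y_1),\Psi'(y_2)\rangle=\Phi'(y_1)^*V^*V\Phi'(y_2)=\Phi'(y_1)^*\Phi'(y_2)=\phi(\langle y_1,y_2\rangle),
\]
so $\Psi'$ is the desired $\phi$-map extension.

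The only delicate point I anticipate is the use of non-degeneracy in (i) to promote the pointwise orthogonality $\Phi(z)^*\Phi'(x)=0$ for all $z\in\mathcal{F}$ to the global conclusion $\Phi'(x)=0$; part (ii) is then a routine intertwining argument once the isometry $V$ is in hand, and in fact does not even require conclusion (i), only the presence of the extension $\Phi'$ granted by hypothesis.
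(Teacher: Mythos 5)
Your proof is correct and follows essentially the same route as the paper: part (i) is the identical orthogonality-plus-non-degeneracy argument, and part (ii) is the same intertwining-isometry construction, except that you build the isometry $V$ directly from the norm identity whereas the paper cites an external result (\cite[Theorem 2.2]{BM}) for its existence. Your version is therefore slightly more self-contained but not substantively different.
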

\begin{proof}
(i)  Assume there is a $\phi$-map
$\Psi:\mathcal{E}\to\mathcal{B}(\mathcal{H}_1,\mathcal{H}_2),$
extending $\Phi.$ For every $h,h'\in\mathcal{H}_1$ and
$x\in\mathcal{F},\hspace{2mm}z\in\mathcal{F}^{\bot}$ one has
$$\langle\Psi(z)h,\Phi(x)h'\rangle=\langle\Psi(z)h,\Psi(x)h'\rangle=
\langle\Psi(x)^*\Psi(z)h,h'\rangle=\langle\phi(\langle
x,z\rangle)h,h'\rangle=0.$$ By the assumption, $\Phi$ is a
non-degenerate map and therefore $\Psi(z)h=0$, thus
$\Psi(\mathcal{F}^{\bot})=\{0\}.$ On the other hand, $\Psi$ is a
$\phi$-map, so for every $x\in\mathcal{E},y\in\mathcal{F}^{\bot}$
$$\phi(\langle y, x\rangle)=\Psi(y)^*\Psi(x)=0.$$
Then $\phi(\langle \mathcal{F}^{\bot},\mathcal{E}\rangle)= \{0\}.$

(ii) Assume
$\Theta:\mathcal{F}\to\mathcal{B}(\mathcal{H}_1,\mathcal{K})$ is a
 $\phi$-map. By \cite[Theorem 2.2]{BM}, there is an isometry
$S:\mathcal{H}_2\to\mathcal{K}$ such that $S\Phi(x)=\Theta(x)$,
for every $x\in\mathcal{F}.$ Define
$\Theta':\mathcal{E}\to\mathcal{B}(\mathcal{H}_1,\mathcal{K})$ by
$\Theta'(x):=S\Psi(x)$ for each $ x\in\mathcal{E}.$ Since $\Psi$
is a $\phi$-map extension of $\Phi$ and $S$ is an isometry,
$\Theta'$ is a $\phi$-map extension of $\Theta.$

\end{proof}

By Kolmogorov's decomposition theorem, for every completely positive
map $\phi:\mathcal{A}\rightarrow\mathcal{B}(\mathcal{H}_{1})$, there
is at least a non-degenerate operator valued $\phi$-map on
$\mathcal{F}$. Therefore,

\begin{corollary}\label{c1}
Assume $\phi:\mathcal{A}\rightarrow\mathcal{B}(\mathcal{H}_{1})$ is
a completely positive map. If every operator valued $\phi$-map on
$\mathcal{F}$ has a $\phi$-map extension on $\mathcal{E},$ then
$\phi(\langle \mathcal{F}^{\perp},\mathcal{E}\rangle)=\{0\}.$
\end{corollary}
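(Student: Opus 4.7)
The plan is to deduce this statement as a direct consequence of Lemma~\ref{l1}(i), using the preceding remark that invokes Kolmogorov's decomposition theorem. Since the corollary is labeled as such, the strategy is simply to exhibit \emph{one} non-degenerate $\phi$-map on $\mathcal{F}$, apply the hypothesis to it, and then invoke the necessary condition already proved in the lemma.

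Concretely, I would proceed as follows. First, given the completely positive map $\phi:\mathcal{A}\rightarrow\mathcal{B}(\mathcal{H}_{1})$, invoke Kolmogorov's decomposition theorem (as recalled immediately before the corollary) to produce an operator-valued $\phi$-map $\Phi:\mathcal{F}\to\mathcal{B}(\mathcal{H}_1,\mathcal{H}_2)$ for some Hilbert space $\mathcal{H}_2$; by replacing $\mathcal{H}_2$ with the closed linear span $[\Phi(\mathcal{F})\mathcal{H}_1]$ if necessary, one may assume $\Phi$ is non-degenerate. Second, apply the standing hypothesis of the corollary to the particular $\phi$-map $\Phi$: it extends to a $\phi$-map $\Psi:\mathcal{E}\to\mathcal{B}(\mathcal{H}_1,\mathcal{H}_2)$. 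At this point the hypotheses of Lemma~\ref{l1} are satisfied, namely the existence of a non-degenerate $\phi$-map on $\mathcal{F}$ which admits a $\phi$-map extension to $\mathcal{E}$. Third, conclude by Lemma~\ref{l1}(i) that
\[
\phi(\langle \mathcal{F}^{\perp},\mathcal{E}\rangle)=\{0\}.
\]

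There is essentially no obstacle: the argument is a one-line application of the lemma. The only subtle point worth checking is that the $\phi$-map supplied by Kolmogorov's decomposition can be taken to be non-degenerate, which is a routine compression to the essential subspace $[\Phi(\mathcal{F})\mathcal{H}_1]$ of $\mathcal{H}_2$.
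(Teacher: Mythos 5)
Your proposal is correct and follows exactly the route the paper intends: the remark preceding the corollary invokes Kolmogorov's decomposition to produce a non-degenerate operator valued $\phi$-map on $\mathcal{F}$, and the corollary is then an immediate application of Lemma~\ref{l1}(i) to that map and its hypothesized extension. Your additional observation that one compresses to $[\Phi(\mathcal{F})\mathcal{H}_1]$ to guarantee non-degeneracy is a harmless (and correct) elaboration of the same argument.
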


 \begin{theorem}\label{t2}
Suppose that
$\phi:\mathcal{A}\rightarrow\mathcal{B}(\mathcal{H}_{1})$ is a
completely positive map and
$\Phi:\mathcal{F}\rightarrow\mathcal{B}(\mathcal{H}_{1},\mathcal{H}_{2})$
is a completely semi-$\phi$-map. Then
  $\Phi$   has a  completely semi-$\phi$-map extension
 $\Phi':\mathcal{E}\rightarrow\mathcal{B}(\mathcal{H}_{1},\mathcal{H}_{2}).$
 Furthermore, if $\Phi$ is a $\phi$-map and  $\phi(\langle \mathcal{F}^{\bot}, \mathcal{E}
\rangle)=0$, then

(i) $\Phi'(\mathcal{F}^{\bot})=\{0\},$

(ii) $\Phi'(x)^*\Phi'(y)=\phi(\langle x,y\rangle)$ and $
\Phi'(y)^*\Phi'(x)=\phi(\langle y,x\rangle),$ for all
$x\in\mathcal{E}, y\in\mathcal{F}\oplus\mathcal{F}^{\bot}$.
  \end{theorem}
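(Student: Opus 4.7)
To produce the completely semi-$\phi$-map extension, I would adapt the KSGNS-type dilation built from the Stinespring decomposition of $\phi$. Write $\phi(a) = V^* \pi(a) V$ where $\pi : \mathcal A \to \mathcal B(\mathcal K)$ is a $*$-representation and $V : \mathcal H_1 \to \mathcal K$ is bounded. Form the interior tensor product $\mathcal E \otimes_\pi \mathcal K$ and, for each $x \in \mathcal E$, the creation operator $L_x : \mathcal K \to \mathcal E \otimes_\pi \mathcal K$, $L_x(k) = x \otimes k$. A routine check gives $L_x^* L_y = \pi(\langle x, y\rangle)$, so $x \mapsto L_x V$ is a $\phi$-map on all of $\mathcal E$; the plan is to compress this universal $\phi$-map by a contraction that reproduces $\Phi$ on $\mathcal F$.

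Next, define $R$ on the linear span $\{\sum_i L_{x_i} V h_i : x_i \in \mathcal F, h_i \in \mathcal H_1\} \subseteq [\mathcal F \otimes_\pi \mathcal K]$ by $R\bigl(\sum_i L_{x_i} V h_i\bigr) := \sum_i \Phi(x_i) h_i$. Well-definedness and contractivity both follow from comparing
\[
\Bigl\|\sum_i L_{x_i} V h_i\Bigr\|^2 = \sum_{i,j}\langle h_i, \phi(\langle x_i, x_j\rangle) h_j\rangle
\quad\text{with}\quad
\Bigl\|\sum_i \Phi(x_i) h_i\Bigr\|^2 = \sum_{i,j}\langle h_i, \Phi(x_i)^* \Phi(x_j) h_j\rangle,
\]
via the matrix inequality $(\Phi(x_i)^* \Phi(x_j))_{ij} \le (\phi(\langle x_i, x_j\rangle))_{ij}$ guaranteed by the \emph{complete} semi-$\phi$ hypothesis. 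Extending $R$ by zero on the orthogonal complement to $R' : \mathcal E \otimes_\pi \mathcal K \to \mathcal H_2$ and setting $\Phi'(x) := R' L_x V$ produces a linear extension of $\Phi$. The semi-$\phi$ bound $\Phi'(x)^* \Phi'(x) = V^* L_x^* {R'}^* R' L_x V \le V^* L_x^* L_x V = \phi(\langle x, x\rangle)$ is immediate from $\|R'\| \le 1$, and the identical computation at each matrix level gives the complete version.

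For the \emph{Furthermore} statement, assume $\Phi$ is a $\phi$-map and $\phi(\langle \mathcal F^\perp, \mathcal E\rangle) = 0$. Claim (i) is immediate: for $z \in \mathcal F^\perp$ one has $\langle z, z\rangle \in \langle \mathcal F^\perp, \mathcal E\rangle$, so $\phi(\langle z, z\rangle) = 0$ and the semi-$\phi$ inequality forces $\Phi'(z) = 0$. For (ii), observe that $\Phi'(y)^* \Phi'(y) = \phi(\langle y, y\rangle)$ holds with equality whenever $y \in \mathcal F$, since $\Phi$ is a $\phi$-map there. Applying the semi-$\phi$ inequality to $\lambda x + y$ for $\lambda \in \mathbb C$, cancelling this equality, and letting $\lambda = t e^{i\theta}$ with $t \to 0^+$ across all phases forces $\Phi'(x)^* \Phi'(y) = \phi(\langle x, y\rangle)$ by a standard polarization-style argument. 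For $y \in \mathcal F^\perp$ both sides vanish by (i) and by $\phi(\langle x, y\rangle) = \phi(\langle y, x\rangle)^* = 0$, and linearity of $\Phi'$ extends the identity to $y \in \mathcal F \oplus \mathcal F^\perp$; the adjoint identity follows by taking adjoints. The principal technical hurdle is establishing that the partial map used to define $R$ is well-defined and contractive, which is precisely where the \emph{complete} (not merely pointwise) semi-$\phi$-hypothesis is essential, since one must apply the matrix inequality to arbitrary finite sums of elementary tensors.
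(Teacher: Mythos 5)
Your proposal is correct, and the construction of the extension is essentially the paper's: both compress the universal KSGNS-type $\phi$-map on $\mathcal{E}$ (your $x\mapsto L_xV$ on $\mathcal{E}\otimes_\pi\mathcal{K}$ is exactly the paper's $x\mapsto\Psi_\rho(x)V$, obtained there by citing [BM, Theorem 2.2]) by the contraction $R'$ (the paper's $W^*=SW_\phi^*$) that the complete semi-$\phi$ matrix inequality produces on the closed span of $\{L_xVh: x\in\mathcal{F},\ h\in\mathcal{H}_1\}$, extended by zero on the orthogonal complement; claim (i) is handled identically in both. Where you genuinely diverge is part (ii) for $y\in\mathcal{F}$: the paper observes that when $\Phi$ is a $\phi$-map the contraction $S_0$ is an isometry, hence $S^*S=P$ and $WW^*$ fixes every vector $\Psi_\rho(y)Vh$ with $y\in\mathcal{F}$, from which $\Phi'(x)^*\Phi'(y)=V^*\Psi_\rho(x)^*\Psi_\rho(y)V=\phi(\langle x,y\rangle)$ drops out by direct computation. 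You instead run a polarization argument: expand the operator inequality $\Phi'(\lambda x+y)^*\Phi'(\lambda x+y)\le\phi(\langle\lambda x+y,\lambda x+y\rangle)$, cancel the equality at $y\in\mathcal{F}$, divide by $t=|\lambda|$ and let $t\to0^+$ over all phases to force the cross term to vanish. I checked this: one gets $e^{-i\theta}B+e^{i\theta}B^*\ge0$ for all $\theta$ with $B=\phi(\langle x,y\rangle)-\Phi'(x)^*\Phi'(y)$, hence $B=0$. This route is dilation-free and proves the stronger statement that \emph{any} semi-$\phi$-map on $\mathcal{E}$ restricting to a $\phi$-map on $\mathcal{F}$ satisfies the mixed identity, whereas the paper's computation is tied to the specific $\Phi'$ it built (and reuses the operators $W$, $\Psi_\rho$ in the uniqueness Corollary that follows); the trade-off is that your argument does not by itself exhibit the isometric structure of $S_0$ that the paper exploits later. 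The remaining steps (vanishing of both sides for $y\in\mathcal{F}^\perp$, extension by linearity, adjoints) are fine.
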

 \begin{proof}
 Let $(\rho,\mathcal{K}_{1},V)$ be a minimal Stinespring's dilation triple for $\phi.$
There is a triple
$((\Phi_{\phi},\mathcal{H}_\phi),(\Psi_{\rho},\mathcal{K}_\rho),W_\phi)$
consists of a non-degenerate $\phi$-map
$\Phi_{\phi}:\mathcal{E}\to\mathcal{B}(\mathcal{H}_1,\mathcal{H}_\phi),$
a non-degenerate $\rho$-representation
$\Psi_{\rho}:\mathcal{E}\to\mathcal{B}(\mathcal{K}_1,\mathcal{K}_\rho)$
and a unitary operator
$W_{\phi}:\mathcal{H}_{\phi}\to\mathcal{K}_{\rho}$ such that
satisfies $W_\phi\Phi_\phi(x)=\Psi_\rho(x)V$ for every
$x\in\mathcal{E}$ by \cite[Theorem 2.2 part(i)]{BM}.
 Since $\Phi$ is a completely semi-$\phi$-map, we have
$[\Phi(x_i)^*\Phi(x_j)]_{i,j}\leq [\phi(\langle
x_i,x_j\rangle)]_{i,j},$ for every $x_1,...,x_n\in\mathcal{F}.$
Therefore, for every
 $h_1,...,h_n\in\mathcal{H}_1$ we have
 $$\|\sum_{i=1}^n\Phi(x_i)h_i\|^2\leq\sum_{i=1}^n\sum_{j=1}^n\langle\phi(x_j,x_i)h_i,h_j\rangle=
 \|\sum_{i=1}^n\Phi_{\phi}(x_i)h_i\|^2.$$
Thus there is a unique contractive linear operator
$S_0:[\Phi_\phi(\mathcal{F})\mathcal{H}_1]\to\mathcal{H}_2$ such
that $S_0\Phi_\phi(x)h=\Phi(x)h$ for every $x\in\mathcal{F}$ and
$h\in\mathcal{H}_1.$ Let $P\in\mathcal{B}(\mathcal{H}_\phi)$ be the
orthogonal projection onto $[\Phi_\phi(\mathcal{F})\mathcal{H}_1].$
Put $S:=S_0P:\mathcal{H}_\phi\to\mathcal{H}_2.$
  Therefore $\Phi(x)=SW_\phi^*\Psi_\rho(x)V$ for every $x\in\mathcal{F}.$
 Put $W:=W_{\phi}S^*$ and define
 $\Phi':\mathcal{E}\rightarrow\mathcal{B}(\mathcal{H}_{1},\mathcal{H}_{2})$
 by
 $$\Phi'(x):=W^*\Psi_{\rho}(x)V$$
 for all $x\in\mathcal{E}.$
  Since $W$ is a contraction, $\Phi'$ is a completely semi-$\phi$-map by \cite[Theorem 2.2 part (iii)]{BM}.
  Obviously,  $\Phi'$ is an extension for $\Phi.$

Now, let $\Phi$ be a $\phi$-map and  $\phi(\langle
\mathcal{F}^{\bot}, \mathcal{E} \rangle)=0$. In this case, the above
inequality becomes equality and so $S_0$ becomes an isometry.
 Also,
$0 \leq\Phi'(x)^*\Phi'(x)\leq\phi(\langle x,x\rangle)=0$ satisfies
for every $x\in\mathcal{F}^\perp.$
   Therefore $\Phi'(\mathcal{F}^\perp)=\{0\}.$

Finally it must be shown that $\Phi'$ satisfies
$\Phi'(x)^*\Phi'(y)=\phi(\langle x,y\rangle)$ for all
$x\in\mathcal{E}\hspace{1mm},\hspace{1mm}y\in\mathcal{F}\oplus\mathcal{F}^{\bot}.$
 For this, assume  $x\in\mathcal{E},\hspace{1mm}y\in\mathcal{F}$ and $h\in\mathcal{H}_{1},$
  we have
 $$WW^*\Psi_\rho(y)Vh=W_\phi S^*SW^*_\phi\Psi_\rho(y)Vh=W_\phi P\Phi_\phi(y)h=W_\phi\Phi_\phi(y)h=\Psi_\rho(y)Vh,$$
   therefore
   $$\Phi'(x)^*\Phi'(y)h=V^*\Psi_\rho(x)^*WW^*\Psi_\rho(y)Vh=
   V^*\Psi_\rho(x)^*\Psi_\rho(y)Vh=V^*\rho(\langle x,y\rangle)Vh=
   \phi(\langle x,y\rangle)h.$$
Thus $\Phi'(x)^*\Phi'(y)=\phi(\langle x,y\rangle)$ for every
$x\in\mathcal{E}$ and $y\in\mathcal{F}.$ Since
$\Phi'(\mathcal{F}^{\bot})=\{0\}$ and
$\phi(\langle\mathcal{E},\mathcal{F}^{\bot}\rangle)=\{0\},$ for
every $x\in\mathcal{E},\hspace{1mm}y\in\mathcal{F},\hspace{1mm}
z\in\mathcal{F}^{\bot}$ we have
 $\Phi'(x)^*\Phi'(z)=0=\phi(\langle x,z\rangle),$ and
  therefore
 $$\Phi'(x)^*\Phi'(y+z)=\Phi'(x)^*\Phi'(y)+\Phi'(x)^*\Phi'(z)=
 \phi(\langle x,y\rangle)+\phi(\langle x,y\rangle)=\phi(\langle x,y+z\rangle).$$

\end{proof}

 The following corollary says that if a non-degenerate operator valued $\phi$-map
 has a $\phi$-map extension, then the extension is
unique.

\begin{corollary}\label{c2}
Let $\phi:\mathcal{A}\to\mathcal{B}(\mathcal{H}_{1})$ be
   a completely positive map,
$\Phi:\mathcal{F}\to\mathcal{B}(\mathcal{H}_{1},\mathcal{H}_{2})$ a
non-degenerate $\phi$-map and
$\Gamma:\mathcal{E}\to\mathcal{B}(\mathcal{H}_{1},\mathcal{H}_{2})$
be a $\phi$-map such that $\Gamma|_{\mathcal{F}}=\Phi.$
 Then $\Gamma=\Phi'$, where $\Phi'$ is as in the proof of Theorem \ref{t2}.
\end{corollary}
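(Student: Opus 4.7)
The plan is to compare $\Gamma$ and $\Phi'$ via the inner product on $\mathcal{B}(\mathcal{H}_1,\mathcal{H}_2)$ viewed as a right $\mathcal{B}(\mathcal{H}_1)$-module, and to use the non-degeneracy of $\Phi$ to pin down $\Gamma(x)$ for every $x\in\mathcal{E}$.

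First I would verify that we are in a position to invoke the second half of Theorem \ref{t2}: since $\Gamma$ is a $\phi$-map on $\mathcal{E}$ that extends the non-degenerate $\phi$-map $\Phi$, Lemma \ref{l1}(i) yields $\phi(\langle\mathcal{F}^{\bot},\mathcal{E}\rangle)=\{0\}$. Hence, by Theorem \ref{t2}(ii), $\Phi'$ satisfies the mixed identity
\[
\Phi'(x)^*\Phi'(y)=\phi(\langle x,y\rangle)\qquad\text{for every } x\in\mathcal{E},\ y\in\mathcal{F}.
\]

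Next, fix $x\in\mathcal{E}$ and $y\in\mathcal{F}$. Since $\Gamma$ is a $\phi$-map on $\mathcal{E}$ and $\Gamma|_{\mathcal{F}}=\Phi=\Phi'|_{\mathcal{F}}$, I would compute
\[
\Gamma(x)^*\Gamma(y)=\phi(\langle x,y\rangle)=\Phi'(x)^*\Phi'(y)=\Phi'(x)^*\Gamma(y),
\]
which gives $(\Gamma(x)-\Phi'(x))^*\Gamma(y)=0$. Equivalently, $\Gamma(y)h$ lies in $\ker(\Gamma(x)-\Phi'(x))^*$ for every $h\in\mathcal{H}_1$ and every $y\in\mathcal{F}$, i.e.\ the range of $\Gamma(x)-\Phi'(x)$ is orthogonal to $[\Gamma(\mathcal{F})\mathcal{H}_1]$.

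Finally, the non-degeneracy of $\Phi$ gives $[\Gamma(\mathcal{F})\mathcal{H}_1]=[\Phi(\mathcal{F})\mathcal{H}_1]=\mathcal{H}_2$, so the range of $\Gamma(x)-\Phi'(x)$ is orthogonal to the whole of $\mathcal{H}_2$ and must therefore be zero. Since $x\in\mathcal{E}$ was arbitrary, $\Gamma=\Phi'$. The only subtle point is that $\Phi'$ is only a completely semi-$\phi$-map on $\mathcal{E}$ in general, so the $\phi$-map identity cannot be invoked directly for $\Phi'$ on the whole module; the decisive tool is the mixed identity provided by Theorem \ref{t2}(ii), and once that is in place the non-degeneracy of $\Phi$ closes the argument.
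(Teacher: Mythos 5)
Your proof is correct, and it reaches the same decisive identity as the paper --- namely $\Gamma(x)^*\Gamma(y)=\Phi'(x)^*\Gamma(y)$ followed by a density argument --- but it gets there by a genuinely different route. The paper re-enters the dilation machinery of Theorem \ref{t2}: it notes that $\Gamma$ is itself non-degenerate with $[\Gamma(\mathcal{E})\mathcal{H}_1]=\mathcal{H}_2$, produces a unitary $W'$ intertwining $\Gamma$ with $\Psi_\rho(\cdot)V$ via \cite[Theorem 2.2]{BM}, proves $W'=W$ by comparing on $\mathcal{F}$, and then obtains $\Gamma(x)^*\Gamma(y)=\Phi'(x)^*\Gamma(y)$ for \emph{all} $x,y\in\mathcal{E}$, concluding from the density of $\Gamma(\mathcal{E})\mathcal{H}_1$. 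You instead treat $\Phi'$ as a black box: you first invoke Lemma \ref{l1}(i) to secure the hypothesis $\phi(\langle\mathcal{F}^{\bot},\mathcal{E}\rangle)=0$ (correctly noting that $\Gamma$ is exactly the required $\phi$-map extension of the non-degenerate $\Phi$), then use the already-proved mixed identity of Theorem \ref{t2}(ii) to get $\Phi'(x)^*\Phi'(y)=\phi(\langle x,y\rangle)$ for $x\in\mathcal{E}$, $y\in\mathcal{F}$, and close the argument using only the density of $[\Phi(\mathcal{F})\mathcal{H}_1]$ rather than of $[\Gamma(\mathcal{E})\mathcal{H}_1]$. Your version is shorter and more modular --- it does not need the intertwining unitary $W'$ or its uniqueness --- while the paper's version is self-contained at the level of the dilation data and incidentally records the extra fact that the identity $\Phi'(x)^*\Gamma(y)=\phi(\langle x,y\rangle)$ holds for all $y\in\mathcal{E}$, not just $y\in\mathcal{F}$. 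Both are valid; your observation that $\Phi'$ is in general only a completely semi-$\phi$-map, so that Theorem \ref{t2}(ii) rather than the $\phi$-map identity must be used, is exactly the right point of care.
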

\begin{proof}
We use the notions of the proof of Theorem \ref{t2}. Since $\Gamma$
is an $\phi$-extension of $\Phi,$ and $\Phi$ is  a non-degenerate
map, then $\Gamma$ is non-degenerate and
$\mathcal{H}_{2}=[\Phi(\mathcal{F})\mathcal{H}_{1}]=[\Gamma(\mathcal{E})\mathcal{H}_{1}].$
 By \cite[Theorem 2.2]{BM}, there is  a unitary operator
   $W':\mathcal{H}_{2}\to\mathcal{K}_{\rho}$ such that
$W'\Gamma(e)h:=\Psi_\rho(e)Vh$ for all
$e\in\mathcal{E},\hspace{1mm}h\in\mathcal{H}_{1}.$
 Thus
$$W'\Phi(f)h=W'\Gamma(f)h=\Psi_{\rho}(f)Vh=W\Phi'(f)h=W\Phi(f)h$$
for all $f\in\mathcal{F},\hspace{1mm}h\in\mathcal{H}_{1}.$
 Therefore $W'=W$.

  Now, if $x,y\in\mathcal{E}$ and $h\in\mathcal{H}_{1},$ then we have
\begin{equation*}\begin{split}
\Gamma(x)^*\Gamma(y)h &=\phi(\langle x,y\rangle)h=V^*\rho(\langle
x,y\rangle)V h=
V^*\Psi_\rho(x)^*\Psi_\rho(y)Vh\\&=V^*\Psi_\rho(x)^*W'\Gamma(y)h=V^*\Psi_\rho(x)^*W\Gamma(y)h=\Phi'(x)^*\Gamma(y)h.
\end{split}\end{equation*}
Since $\Gamma(x)^*$ and $\Phi'(x)^*$ are bounded operators and
$[\Gamma(\mathcal{E})\mathcal{H}_{1}]=\mathcal{H}_{2},$ we have
$\Gamma(x)^*=\Phi'(x)^*.$
\end{proof}

If $\mathcal{A}$  is a $C^*$-subalgebra of $K(\mathcal{H})$, then it
is well known that, every closed submodule $\mathcal{F}$ of
$\mathcal{E}$ satisfies the equations
$\mathcal{F}^{\perp\perp}=\mathcal{F}$ and
$\mathcal{F}\oplus\mathcal{F}^{\perp}=\mathcal{E}.$ Therefore, by
Theorem \ref{t2} and Corollary \ref{c2} we can conclude  that the
necessary condition
$\phi(\langle\mathcal{F}^{\perp},\mathcal{E}\rangle)=0$ in Lemma
\ref{l1}, is a sufficient condition for existence of $\phi$-map
extension, in this case.

\begin{corollary}\label{c3}
If $\mathcal{A}$  is a $C^*$-algebra of compact operators
 and
$\phi:\mathcal{A}\to\mathcal{B}(\mathcal{H}_{1})$  is a completely
positive map and $\mathcal{E}$ is a full Hilbert
$\mathcal{A}$-module.
  Then the following statements are equivalent:

 (i) $\phi(\langle\mathcal{F}^{\perp},\mathcal{E}\rangle)=0,$

 (ii) there is a non-degenerate
  operator valued $\phi$-map
on $\mathcal{F}$ which has a $\phi$-map extension on $\mathcal{E}$,

 (iii) every operator valued $\phi$-map
on $\mathcal{F}$
   has a $\phi$-map extension on
$\mathcal{E}.$

(iv) for every $\phi$-map
$\Phi:\mathcal{F}\to\mathcal{B}(\mathcal{H}_{1},\mathcal{H}_{2}),$
the map $\Phi'=\Phi \oplus
0:\mathcal{E}=\mathcal{F}\oplus\mathcal{F}^{\perp}\to\mathcal{B}(\mathcal{H}_{1},\mathcal{H}_{2})$
is a $\phi$-map and also $\Phi'$ is the unique $\phi$-map extension
of $\Phi$ on $\mathcal{E}.$

\end{corollary}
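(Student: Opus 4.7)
The plan is to establish the cycle $(i) \Rightarrow (iv) \Rightarrow (iii) \Rightarrow (ii) \Rightarrow (i)$. The implication $(ii) \Rightarrow (i)$ is precisely Lemma \ref{l1}(i). For $(iii) \Rightarrow (ii)$, the Kolmogorov decomposition produces a non-degenerate operator valued $\phi$-map on $\mathcal{F}$, which by hypothesis extends. The step $(iv) \Rightarrow (iii)$ is immediate. So all real content is in $(i) \Rightarrow (iv)$.

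For $(i) \Rightarrow (iv)$, I would start from an operator valued $\phi$-map $\Phi$ on $\mathcal{F}$ and feed it into Theorem \ref{t2}, obtaining a completely semi-$\phi$-map extension $\Phi'$ on $\mathcal{E}$ that vanishes on $\mathcal{F}^\perp$ and satisfies the partial identity $\Phi'(x)^*\Phi'(y) = \phi(\langle x,y\rangle)$ whenever $x \in \mathcal{E}$ and $y \in \mathcal{F} \oplus \mathcal{F}^\perp$. The compactness hypothesis on $\mathcal{A}$ supplies the orthogonal decomposition $\mathcal{E} = \mathcal{F} \oplus \mathcal{F}^\perp$, so the partial identity becomes the full $\phi$-map identity on $\mathcal{E}$, upgrading $\Phi'$ from a completely semi-$\phi$-map to a genuine $\phi$-map. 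The explicit formula $\Phi'(\cdot) = W^*\Psi_\rho(\cdot)V$ from the proof of Theorem \ref{t2} is linear through the representation $\Psi_\rho$, so $\Phi'(f+f') = \Phi'(f) + \Phi'(f') = \Phi(f)$, giving $\Phi' = \Phi \oplus 0$ on $\mathcal{E} = \mathcal{F} \oplus \mathcal{F}^\perp$.

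For uniqueness, Corollary \ref{c2} does not apply directly since $\Phi$ is not assumed non-degenerate, so I would argue via a direct inner product computation. Let $\Gamma$ be any $\phi$-map extension of $\Phi$. Hypothesis (i) forces $\Gamma(f')^*\Gamma(f') = \phi(\langle f',f'\rangle) = 0$ and hence $\Gamma(f') = 0$ for every $f' \in \mathcal{F}^\perp$. For $x = f+f' \in \mathcal{F} \oplus \mathcal{F}^\perp$, I would expand
$$\bigl(\Gamma(x) - \Phi(f)\bigr)^*\bigl(\Gamma(x) - \Phi(f)\bigr)$$
using the $\phi$-map identity for $\Gamma$ and $\Phi$, the equality $\Gamma|_{\mathcal{F}} = \Phi$, the orthogonality $\langle f,f'\rangle = 0$, and the vanishing $\phi(\langle f',f'\rangle) = 0$ from (i). All four resulting terms reduce to $\phi(\langle f,f\rangle)$ with opposite signs and cancel to zero, forcing $\Gamma(x) = \Phi(f) = (\Phi \oplus 0)(x)$.

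The main subtlety is handling the uniqueness clause without access to Corollary \ref{c2}; this relies crucially on the observation that although $\langle f',f'\rangle$ need not vanish in $\mathcal{A}$, its image under $\phi$ does by hypothesis (i), and the two-sided decomposition $\mathcal{E} = \mathcal{F} \oplus \mathcal{F}^\perp$ afforded by the compact-operators hypothesis is exactly what is needed for the cross terms in the expansion to close up.
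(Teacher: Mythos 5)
Your proposal is correct, and its overall skeleton matches what the paper intends: the paper gives no displayed proof of this corollary but remarks just before it that, since a $C^*$-algebra of compact operators forces $\mathcal{F}\oplus\mathcal{F}^{\perp}=\mathcal{E}$, the conclusion follows ``by Theorem \ref{t2} and Corollary \ref{c2}.'' Your treatment of existence is exactly that route: feed $\Phi$ into Theorem \ref{t2}, note that part (ii) of its conclusion becomes the full $\phi$-map identity because $\mathcal{F}\oplus\mathcal{F}^{\perp}$ exhausts $\mathcal{E}$, and read off $\Phi'=\Phi\oplus 0$ from $\Phi'(\mathcal{F}^{\perp})=\{0\}$ and linearity. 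Where you genuinely depart from the paper is the uniqueness clause of (iv): the paper leans on Corollary \ref{c2}, which is stated only for non-degenerate $\Phi$, whereas (iv) quantifies over all operator valued $\phi$-maps on $\mathcal{F}$. Your direct expansion of $\bigl(\Gamma(f+f')-\Phi(f)\bigr)^{*}\bigl(\Gamma(f+f')-\Phi(f)\bigr)$, using $\langle f,f'\rangle=0$ and $\phi(\langle f',f'\rangle)=0$ from (i), closes this gap cleanly and without any dilation-theoretic machinery; it is arguably the more honest argument, and as a bonus it shows that hypothesis (i) alone (given the orthogonal decomposition) pins down the extension, independently of how $\Phi'$ was constructed. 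The paper's route buys nothing extra here except brevity of citation. Your cycle $(i)\Rightarrow(iv)\Rightarrow(iii)\Rightarrow(ii)\Rightarrow(i)$ is complete: $(ii)\Rightarrow(i)$ is Lemma \ref{l1}(i), and $(iii)\Rightarrow(ii)$ correctly invokes the Kolmogorov-decomposition remark preceding Corollary \ref{c1} to guarantee that a non-degenerate $\phi$-map on $\mathcal{F}$ exists at all.
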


Since the $C^*$-algebra $\mathcal{K}(\mathcal{H})$ is simple, every
nonzero Hilbert $\mathcal{K}(\mathcal{H})$-module is full. In
particular, $\langle\mathcal{F}^{\perp},\mathcal{E}\rangle =
\langle\mathcal{F}^{\perp},\mathcal{F}^{\perp}\rangle =
\mathcal{K}(\mathcal{H})$. Therefore,
$\phi(\langle\mathcal{F}^{\perp},\mathcal{E}\rangle)\neq0,$ for
every nonzero completely positive map
$\phi:\mathcal{K}(\mathcal{H})\to\mathcal{B}(\mathcal{H}_{1})$.
Hence we have
\begin{corollary}\label{c4}
If $\mathcal{A}=\mathcal{K}(\mathcal{H})$ and
$\phi:\mathcal{A}\to\mathcal{B}(\mathcal{H}_{1})$  is a nonzero
completely positive map, then any operator valued $\phi$-map on
$\mathcal{F}$
 has no
$\phi$-map extension on $\mathcal{E}.$
\end{corollary}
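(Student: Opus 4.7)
The plan is to deduce the corollary from the contrapositive of Corollary \ref{c1} (equivalently, of Lemma \ref{l1}(i)) by verifying that the necessary condition $\phi(\langle \mathcal{F}^\perp, \mathcal{E}\rangle) = \{0\}$ fails in the present setting. All the required ingredients are in fact already assembled in the paragraph preceding the statement; the proof amounts to packaging those observations and invoking the earlier results.

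First I would record the structural facts about Hilbert modules over $\mathcal{K}(\mathcal{H})$. By the standard result recalled just before Corollary \ref{c3}, every closed submodule of a Hilbert $\mathcal{K}(\mathcal{H})$-module is orthogonally complemented, so $\mathcal{E} = \mathcal{F} \oplus \mathcal{F}^\perp$. Because $\mathcal{F}$ is non-trivial by the standing convention of this section, $\mathcal{F}^\perp$ is a nonzero Hilbert $\mathcal{K}(\mathcal{H})$-module. Simplicity of $\mathcal{K}(\mathcal{H})$ now forces the closed two-sided ideal generated by $\langle \mathcal{F}^\perp, \mathcal{F}^\perp\rangle$ to be all of $\mathcal{K}(\mathcal{H})$, so $\mathcal{F}^\perp$ is a full Hilbert module. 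Combining this with the decomposition gives
\[
\langle \mathcal{F}^\perp, \mathcal{E}\rangle \;=\; \langle \mathcal{F}^\perp, \mathcal{F} \oplus \mathcal{F}^\perp\rangle \;=\; \langle \mathcal{F}^\perp, \mathcal{F}^\perp\rangle \;=\; \mathcal{K}(\mathcal{H}),
\]
and since $\phi$ is nonzero, $\phi(\langle \mathcal{F}^\perp, \mathcal{E}\rangle) = \phi(\mathcal{K}(\mathcal{H})) \neq \{0\}$.

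At this point the conclusion follows at once: by the contrapositive of Corollary \ref{c1} it is not the case that every operator valued $\phi$-map on $\mathcal{F}$ extends to a $\phi$-map on $\mathcal{E}$, and more sharply the contrapositive of Lemma \ref{l1}(i) says that no non-degenerate operator valued $\phi$-map on $\mathcal{F}$ can admit such an extension. Non-degenerate $\phi$-maps on $\mathcal{F}$ exist (by Kolmogorov's decomposition, as noted just before Corollary \ref{c1}), so this is a genuine obstruction rather than a vacuous statement. There is no substantive difficulty in this argument; the only delicate point is securing that $\mathcal{F}^\perp$ is nonzero, which is precisely what the complementation property for submodules over compact-operator algebras provides, and without which the simplicity/fullness step would have no content.
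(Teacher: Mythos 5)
Your argument is correct and is essentially the paper's own: the paragraph preceding the corollary likewise uses simplicity of $\mathcal{K}(\mathcal{H})$ to get that the nonzero module $\mathcal{F}^{\perp}$ is full, hence $\langle\mathcal{F}^{\perp},\mathcal{E}\rangle=\langle\mathcal{F}^{\perp},\mathcal{F}^{\perp}\rangle=\mathcal{K}(\mathcal{H})$, so $\phi(\langle\mathcal{F}^{\perp},\mathcal{E}\rangle)\neq\{0\}$, and then invokes the contrapositive of Lemma \ref{l1}(i)/Corollary \ref{c1}. Your added remarks (that $\mathcal{F}^{\perp}\neq 0$ via complementation over compact-operator algebras, and that non-degenerate $\phi$-maps exist so the obstruction is non-vacuous) only make explicit what the paper leaves implicit.
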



\section{Category of Hilbert $C^*$-modules and Completely semi-$\phi$-maps}

In the following, we define the category $\mathcal{C}_{H,C^*}$ as
a category whose objects are pairs $(\mathcal{E},\mathcal{A})$
where $\mathcal{A}$ is a $C^*$-algebra and $\mathcal{E}$ is  a
right Hilbert $\mathcal{A}$-module and a morphism from
$(\mathcal{E}_1,\mathcal{A}_1)$ to $(\mathcal{E}_2,\mathcal{A}_2)$
is a pair $(\Phi,\phi)$ consists of a completely positive map
$\phi:\mathcal{A}\rightarrow\mathcal{B}$ and a completely
semi-$\phi$-map $\Phi:\mathcal{E}_1\rightarrow\mathcal{E}_2$ and
the composition of two morphisms $(\Phi,\phi)$ and $(\Psi,\psi)$
is $(\Phi,\phi)\circ(\Psi,\psi):=(\Phi\circ\Psi,\phi\circ\psi).$
 If we restrict ourselves to the case of
full Hilbert $C^*$-modules over unital $C^*$-algebras and unital
completely positive maps, we obtain a subcategory of
$\mathcal{C}_{H,C^*}$ which we denote it by
$\mathcal{C}^{1}_{H,C^*}.$ In the following we generalize some
results on the characterization of completely semi-$\varphi$-maps
and use it to better understanding $\mathcal{C}^{1}_{H,C^*}$ as  a
subcategory of operator systems $\mathcal{C}_\mathcal{OS},$ and
characterize its injective objects. Finally, we compare this new
category with the category of Hilbert $C^*$-modules when its
morphisms are $\phi$-maps, completely bounded maps or Hilbert
modules morphisms.

For a Hilbert $C^*$-module $\mathcal{E}$ over a $C^*$-algebra
$\mathcal{A}$, the smallest operator system which contains
$\mathcal{A}$ and $\mathcal{E}$ is denoted by
$S_{\mathcal{A}}(\mathcal{E})$ and is defined as follow
$S_{\mathcal{A}}(\mathcal{E}):=\begin{bmatrix}
\mathbb{C}I_{\mathcal{E}} & \mathcal{E} \\
\mathcal{E}^* & \mathcal{A}
\end{bmatrix}=\{\begin{bmatrix}
\lambda & x \\
y^* & a
\end{bmatrix} | \ a\in \mathcal{A}, \ \lambda\in\mathbb{C}, \ x,y\in\mathcal{E}\}.$
The following theorem is a generalization of \cite[Lemma 3.2]{ABN}
which is useful in the study of $\mathcal{C}_{H,C^*}.$


\begin{proposition}\label{p3.1}
Suppose that $\mathcal{E}$ and $\mathcal{F}$ are right Hilbert
$C^*$-modules over the $C^*$-algebras $\mathcal{A,B}$,
respectively, and also $\phi:\mathcal{A}\rightarrow\mathcal{B}$ is
a completely positive map and
$\Phi:\mathcal{E}\rightarrow\mathcal{F}$ is a linear map. Then,
$\Phi$ is a completely semi-$\phi$-map if and only if
$$\begin{bmatrix}
id & \Phi \\
\Phi^* & \phi
\end{bmatrix}:S_\mathcal{A}(\mathcal{E})\rightarrow S_\mathcal{B}(\mathcal{F})
 \ \ (\text{given by} \ \ \begin{bmatrix}
\lambda & x \\
y^* & a
\end{bmatrix}\mapsto \begin{bmatrix}
\lambda  & \Phi(x) \\
\Phi(y)^* & \phi(a)
\end{bmatrix})$$
 is a completely positive map.
\end{proposition}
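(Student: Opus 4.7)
The plan is to reduce the positivity question on both sides to a Schur-complement characterisation of positivity in $M_n(S_\mathcal{A}(\mathcal{E}))$ and $M_n(S_\mathcal{B}(\mathcal{F}))$, and then connect the two via the defining inequality $\langle\Phi_n(x),\Phi_n(x)\rangle\le\phi_n(\langle x,x\rangle)$. First I would fix faithful nondegenerate representations $\mathcal{A}\hookrightarrow\mathcal{B}(\mathcal{H})$ and the compatible embedding $\mathcal{E}\hookrightarrow\mathcal{B}(\mathcal{H},\mathcal{K})$, so that $S_\mathcal{A}(\mathcal{E})\subset\mathcal{B}(\mathcal{K}\oplus\mathcal{H})$ inherits its operator system structure. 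After passing to $M_n$, I would record the following clean criterion: an element $\bigl[\begin{smallmatrix}\Lambda & X\\ X^* & A\end{smallmatrix}\bigr]\in M_n(S_\mathcal{A}(\mathcal{E}))$ is positive if and only if $\Lambda\ge 0$ in $M_n$, $A\ge 0$ in $M_n(\mathcal{A})$, and, after replacing $\Lambda$ by $\Lambda+\varepsilon I$ (which keeps the matrix positive), the Schur complement condition $X^*\Lambda^{-1}X\le A$ holds in $M_n(\mathcal{A})$.

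With this criterion in hand, the ``if'' direction is short. Given $X\in M_n(\mathcal{E})$, the element $\bigl[\begin{smallmatrix} I_n & X\\ X^* & \langle X,X\rangle\end{smallmatrix}\bigr]$ lies in $M_n(S_\mathcal{A}(\mathcal{E}))$ and is positive (the Schur complement $X^*X\le\langle X,X\rangle$ is an equality). Applying the assumed completely positive map and reading off positivity in $M_n(S_\mathcal{B}(\mathcal{F}))$ via the criterion yields $\Phi_n(X)^*\Phi_n(X)\le\phi_n(\langle X,X\rangle)$, i.e., $\Phi$ is a completely semi-$\phi$-map.

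For the ``only if'' direction I would start from a positive $\bigl[\begin{smallmatrix}\Lambda & X\\ X^* & A\end{smallmatrix}\bigr]\in M_n(S_\mathcal{A}(\mathcal{E}))$, perturb by $\varepsilon I$ on the $\Lambda$-block to make $\Lambda$ invertible in $M_n$, and set $Y:=\Lambda^{-1/2}X\in M_n(\mathcal{E})$. Positivity gives $\langle Y,Y\rangle=X^*\Lambda^{-1}X\le A$, hence (by complete positivity of $\phi$) $\phi_n(\langle Y,Y\rangle)\le\phi_n(A)$. Because $\Phi$ is linear, $\Phi_n(Y)=\Lambda^{-1/2}\Phi_n(X)$, and the completely semi-$\phi$ inequality for $Y$ gives
\[
\Phi_n(X)^*\Lambda^{-1}\Phi_n(X)=\Phi_n(Y)^*\Phi_n(Y)\le\phi_n(\langle Y,Y\rangle)\le\phi_n(A).
\]
By Schur complement again, $\bigl[\begin{smallmatrix}\Lambda+\varepsilon I & \Phi_n(X)\\ \Phi_n(X)^* & \phi_n(A)\end{smallmatrix}\bigr]\ge 0$ in $M_n(S_\mathcal{B}(\mathcal{F}))$; letting $\varepsilon\to 0$ finishes the argument.

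The step I expect to take most care is the first one: writing down and justifying the Schur-complement characterisation of positivity in $M_n(S_\mathcal{A}(\mathcal{E}))$, since $\mathcal{A}$ need not be unital, $\mathcal{E}$ need not be full, and the corner $\mathbb{C}I_\mathcal{E}$ should really be understood inside the unitised linking algebra. Once the criterion is cleanly in place (and the fact that $\Phi_n$ commutes with left multiplication by scalar matrices $\Lambda^{\pm 1/2}\in M_n$ is noted), both implications are essentially a single application of completely semi-$\phi$-ness followed by the Schur reduction.
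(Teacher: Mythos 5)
Your argument is correct. Note, however, that the paper itself gives no proof to compare against: its ``proof'' of Proposition \ref{p3.1} is a one-line deferral to \cite[Lemma 3.2]{ABN} (a submitted paper), so you have in effect supplied the self-contained argument that the authors omit. What you do --- the off-diagonal/Schur-complement technique: identify $M_n(S_\mathcal{A}(\mathcal{E}))$ via the canonical shuffle with $2\times 2$ blocks $\bigl[\begin{smallmatrix}\Lambda & X\\ X^* & A\end{smallmatrix}\bigr]$ with $\Lambda\in M_n(\mathbb{C})$ scalar, characterize positivity by $X^*(\Lambda+\varepsilon)^{-1}X\le A$ for \emph{all} $\varepsilon>0$, use the canonical positive element $\bigl[\begin{smallmatrix}I_n & X\\ X^* & \langle X,X\rangle\end{smallmatrix}\bigr]$ for one direction and the substitution $Y=(\Lambda+\varepsilon)^{-1/2}X$ for the other --- is the standard proof of such ``Paulsen-system'' equivalences and is surely what the cited lemma does. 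Two small points to make explicit when you write it up: (i) the Schur criterion must carry the quantifier ``for all $\varepsilon>0$'' followed by a norm-limit as $\varepsilon\to 0$ (positivity is closed), since a single $\varepsilon$ does not suffice; and (ii) the identity $\Phi_n(\Lambda^{-1/2}X)=\Lambda^{-1/2}\Phi_n(X)$ is exactly where the hypothesis that $\Phi$ is linear enters --- the definition of completely semi-$\phi$-map alone does not assume linearity, so this step deserves a sentence. With those in place the proof is complete.
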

\begin{proof}
The same argument as in the proof of \cite[Lemma 3.2]{ABN}, works
here.
\end{proof}

Hence we have the following result.

 \begin{theorem}\label{t3.2}
 $\mathcal{C}^{1}_{H,C^*}$ is (up to isomorphism) a subcategory of $\mathcal{C}_\mathcal{OS},$ the category of operator systems.
\end{theorem}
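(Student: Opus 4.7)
The plan is to construct an explicit functor $F:\mathcal{C}^{1}_{H,C^*}\to\mathcal{C}_\mathcal{OS}$ and verify that it is a faithful embedding, so that via $F$ we may identify $\mathcal{C}^{1}_{H,C^*}$ with a subcategory of the category of operator systems. On objects I would set $F(\mathcal{E},\mathcal{A}):=S_\mathcal{A}(\mathcal{E})$, which is an operator system as recalled just before Proposition \ref{p3.1}. On a morphism $(\Phi,\phi):(\mathcal{E}_1,\mathcal{A}_1)\to(\mathcal{E}_2,\mathcal{A}_2)$ I would set
$$F(\Phi,\phi):=\begin{bmatrix} id & \Phi \\ \Phi^* & \phi \end{bmatrix}:S_{\mathcal{A}_1}(\mathcal{E}_1)\to S_{\mathcal{A}_2}(\mathcal{E}_2).$$
Proposition \ref{p3.1} guarantees $F(\Phi,\phi)$ is completely positive, and since in $\mathcal{C}^{1}_{H,C^*}$ the map $\phi$ is assumed unital, the unit $\begin{bmatrix} 1 & 0 \\ 0 & 1_{\mathcal{A}_1} \end{bmatrix}$ of $S_{\mathcal{A}_1}(\mathcal{E}_1)$ is sent to the unit of $S_{\mathcal{A}_2}(\mathcal{E}_2)$, so $F(\Phi,\phi)$ is a legitimate (unital completely positive) morphism in $\mathcal{C}_\mathcal{OS}$.

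For functoriality, the identity morphism on $(\mathcal{E},\mathcal{A})$ clearly goes to the identity map on $S_\mathcal{A}(\mathcal{E})$, and composition is preserved by a direct entrywise computation showing
$$F(\Phi_2,\phi_2)\circ F(\Phi_1,\phi_1)=F\bigl((\Phi_2,\phi_2)\circ(\Phi_1,\phi_1)\bigr),$$
which matches the composition rule $(\Phi_2,\phi_2)\circ(\Phi_1,\phi_1)=(\Phi_2\circ\Phi_1,\phi_2\circ\phi_1)$ defined in $\mathcal{C}^{1}_{H,C^*}$. Faithfulness of $F$ is also immediate: the $(1,2)$ and $(2,2)$ entries of $F(\Phi,\phi)$ uniquely recover $\Phi$ and $\phi$, so distinct morphisms go to distinct morphisms.

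I expect the principal subtlety to be articulating cleanly in what sense $F$ realizes $\mathcal{C}^{1}_{H,C^*}$ as a subcategory \emph{up to isomorphism}: given $(\mathcal{E},\mathcal{A})$, one must be able to recover the pair from $S_\mathcal{A}(\mathcal{E})$, which is done by extracting $\mathcal{A}$ as the lower-right corner and $\mathcal{E}$ as the upper-right corner via compressions by the two diagonal projections of the unit. This recovery also confirms that any isomorphism of operator systems in the image of $F$ preserves the block decomposition and therefore descends to an isomorphism in $\mathcal{C}^{1}_{H,C^*}$, completing the identification.
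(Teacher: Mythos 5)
Your proposal is correct and follows essentially the same route as the paper: both define the functor sending $(\mathcal{E},\mathcal{A})$ to $S_\mathcal{A}(\mathcal{E})$ and $(\Phi,\phi)$ to the corner map $\begin{bmatrix} id & \Phi \\ \Phi^* & \phi \end{bmatrix}$, invoke Proposition \ref{p3.1} for complete positivity, and check functoriality and injectivity. Your additional remarks on unitality and on recovering $(\mathcal{E},\mathcal{A})$ from the corners are a welcome elaboration of what the paper leaves implicit, but not a different argument.
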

\begin{proof}
Define the map
$\Sigma:\mathcal{C}^{1}_{H,C^*}\rightarrow\mathcal{C}_\mathcal{OS}$
which corresponds to  every object $(\mathcal{E,A})$ of
$\mathcal{C}^{1}_{H,C^*},$ the operator system
$S_\mathcal{A}(\mathcal{E}),$ and corresponds to every morphism
$(\Phi,\phi)$ between two objects of $\mathcal{C}^{1}_{H,C^*}$
such as $(\mathcal{E,A})$ and $(\mathcal{F,B}),$ the unital
completely positive map $\begin{bmatrix}
id & \Phi \\
\Phi^* & \phi
\end{bmatrix}:S_\mathcal{A}(\mathcal{E})\rightarrow S_\mathcal{B}(\mathcal{F})$.
It is easy to check that for
$(\Phi_1,\phi_1):(\mathcal{E}_1,\mathcal{A}_1)\rightarrow(\mathcal{E}_2,\mathcal{A}_2)$
and
$(\Phi_2,\phi_2):(\mathcal{E}_2,\mathcal{A}_2)\rightarrow(\mathcal{E}_3,\mathcal{A}_3)$
$\Sigma((\Phi_2,\phi_2)\circ(\Phi_1,\phi_1))=\Sigma((\Phi_2,\phi_2))\circ\Sigma((\Phi_1,\phi_1)).$
Thus $\Sigma$ is a one-to-one covariant functor.
\end{proof}

Therefore, we can consider $\mathcal{C}_{H,C^*}^1$ as a category
consists of block-wise operator systems $\begin{bmatrix}
\mathbb{C}I_{\mathcal E} & \mathcal E\\
\mathcal{E}^* & \mathcal A \end{bmatrix},$ where $\mathcal A$ is a
unital $C^*$-algebra and $\mathcal E$ is a full right Hilbert
$\mathcal A$-module, and morphisms are corner preserving unital
completely positive maps.

We remark that there is some completely positve map between
operator systems $S_{\mathcal{A}}(\mathcal{E})$ and
$S_{\mathcal{B}}(\mathcal{F})$  which is not corner preserving.


\begin{ex}\label{e3.4}
For a given Hilbert space $\mathcal H$ and every bounded operators
$T_1,T_2,T_3,T_4$ on it, by elementary row and column operations
we have the following unitary equivalence in
$\mathcal{B}(\mathcal{H}^4)$
$$\begin{bmatrix}
T_1 & 0 & 0 & T_2 \\
0 & T_4 & 0 & 0 \\
0 & 0 & T_1 & 0 \\
T_3 & 0 & 0 & T_4
\end{bmatrix}\cong\begin{bmatrix}
T_1 & T_2 & 0 & 0 \\
T_3 & T_4 & 0 & 0 \\
0 & 0 & T_1 & 0 \\
0 & 0 & 0 & T_4
\end{bmatrix}. $$
Therefore the map $\varphi:\mathcal{B}(\mathcal{H}^2)\rightarrow\mathcal{B}(\mathcal{H}^4)$ defined by
$$\varphi(\begin{bmatrix}
T_1 & T_2\\
T_3 & T_4
\end{bmatrix}):=\begin{bmatrix}
T_1 & 0 & 0 & T_2 \\
0 & T_4 & 0 & 0 \\
0 & 0 & T_1 & 0 \\
T_3 & 0 & 0 & T_4
\end{bmatrix}$$
is a unital completely positive map which is not
corner-preserving. Now considering $\mathcal{B}(\mathcal{H}^i)$ as
Hilbert $C^*$-module over itself, for $i=2,4,$ and restriction of
$\varphi$ on
$S_{\mathcal{B}(\mathcal{H}^2)}(\mathcal{B}(\mathcal{H}^2))$
provides an example of a unital completely positive map
$\varphi:S_{\mathcal{B}(\mathcal{H}^2)}(\mathcal{B}(\mathcal{H}^2))
\rightarrow
S_{\mathcal{B}(\mathcal{H}^4)}(\mathcal{B}(\mathcal{H}^4))$ which
is not corner preserving, thus it is not a morphism in
$\mathcal{C}_{H,C^*}^1.$

\end{ex}


\begin{definition}\label{d3.5}
Let $(\mathcal{E,A})$ and $(\mathcal{F,B})$ be two objects of
$\mathcal{C}_{H,C^*}.$ We say that $(\mathcal{E,A})$ contained in
$(\mathcal{F,B})$ (or $(\mathcal{F,B})$ contains
$(\mathcal{E,A})),$ and denote it by
$(\mathcal{E,A})\subset(\mathcal{F,B}),$ when $\mathcal{A}$ is a
$C^*$-subalgebra of $\mathcal{B}$ and
 $\mathcal{E}\subseteq\mathcal{F}$ and $\langle x,y\rangle_\mathcal{E}=\langle x,y\rangle_\mathcal{F}$ for every $x,y\in\mathcal{E}.$
\end{definition}


\begin{definition}\label{d3.6}
An object $(\mathcal{E,A})\in\mathcal{C}^{1}_{H,C^*}$ is an
injective object in $\mathcal{C}_{H,C^*}$ when for every pair of
elements of $\mathcal{C}_{H,C^*}$ such as $(\mathcal{F,B})$ and
$(\mathcal{G,C})$ which $(\mathcal{G,C})$ contained in
$(\mathcal{F,B})$ if there exists a morphism
$(\Phi,\phi):(\mathcal{G,C})\rightarrow(\mathcal{E,A}),$ then
there exists a morphism
$(\Psi,\psi):(\mathcal{F,B})\rightarrow(\mathcal{E,A})$ such that
$\psi$ is an extension of $\phi$ and $\Psi$ is an extension for
$\Phi.$
\end{definition}

We are going to give a characterization of injective objects of
$\mathcal{C}^{1}_{H,C^*}$.  In fact, the next theorem is a
generalization of Theorem \ref{t2}. Before proving the theorem, we
recall some results on injectivity. For an operator space $W,$ its
injective envelope is denoted by $I(W)$ and is the operator space
which contains $W$ such that for every operator space $V$ and
every completely bounded map $\Phi:W\rightarrow V$ there exists a
completely bounded map $\Psi:I(W)\rightarrow V$ such that
$\Psi|_{W}=\Phi.$
 The Paulson operator system associated to $W$ is  $\begin{bmatrix}
\mathbb{C}id & W \\
W^* & \mathbb{C}id
\end{bmatrix}$ and denoted by $S(W).$ First, we prove the following lemma.


\begin{lemma}\label{l3.8}
Let $\mathcal{E}$ be a full right Hilbert $C^*$-module over a
unital $C^*$-algebra $\mathcal A.$
 Then $I(S(\mathcal E))=I(S_{\mathcal A}(\mathcal E)).$
\end{lemma}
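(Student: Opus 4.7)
The plan is to sandwich $S_{\mathcal{A}}(\mathcal{E})$ between $S(\mathcal{E})$ and $I(S(\mathcal{E}))$ as operator systems, and then invoke the minimality/rigidity of the injective envelope. Since $\mathbb{C}id\subseteq \mathcal{A}$, it is immediate that $S(\mathcal{E})\subseteq S_{\mathcal{A}}(\mathcal{E})$ as operator systems (same matrix order inherited from the ambient $C^*$-algebra), so any injective extension of $S_{\mathcal{A}}(\mathcal{E})$ is also an injective extension of $S(\mathcal{E})$; by minimality this gives $I(S(\mathcal{E}))\subseteq I(S_{\mathcal{A}}(\mathcal{E}))$. The content of the lemma is therefore the reverse containment, namely that $S_{\mathcal{A}}(\mathcal{E})$ already sits inside $I(S(\mathcal{E}))$.

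To realise that containment I would invoke Hamana's theorem that the injective envelope of an operator system carries a canonical $C^*$-algebra structure; applied to $S(\mathcal{E})$ this yields a unital $C^*$-algebra $\mathcal{D}$ in which the $(1,1)$ and $(2,2)$ corners are injective unital $C^*$-algebras, and the embedding $S(\mathcal{E})\hookrightarrow\mathcal{D}$ sends $x\in\mathcal{E}$ and $y^*\in\mathcal{E}^*$ to the $(1,2)$ and $(2,1)$ corners of $\mathcal{D}$, respectively. Inside $\mathcal{D}$ the product $y^{*}x$ lives in the $(2,2)$-corner, and I would check that under the canonical identification $S(\mathcal{E})\hookrightarrow I(S(\mathcal{E}))$ this $C^*$-product agrees with the $\mathcal{A}$-valued inner product $\langle y,x\rangle\in\mathcal{A}$. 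This compatibility is the key point and is where fullness of $\mathcal{E}$ enters: the closed linear span of $\{\langle y,x\rangle:x,y\in\mathcal{E}\}$ is all of $\mathcal{A}$, so the subalgebra of the $(2,2)$-corner generated by the products $y^{*}x$ is (completely) isometrically isomorphic to $\mathcal{A}$. Consequently $\mathcal{A}$ embeds into the $(2,2)$-corner of $I(S(\mathcal{E}))$, and the whole $S_{\mathcal{A}}(\mathcal{E})$ embeds as an operator subsystem of $I(S(\mathcal{E}))$ extending the inclusion of $S(\mathcal{E})$.

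Once the chain $S(\mathcal{E})\subseteq S_{\mathcal{A}}(\mathcal{E})\subseteq I(S(\mathcal{E}))$ is established, the argument is straightforward: $I(S(\mathcal{E}))$ is injective and contains $S_{\mathcal{A}}(\mathcal{E})$, so by minimality of the injective envelope $I(S_{\mathcal{A}}(\mathcal{E}))\subseteq I(S(\mathcal{E}))$. Combined with the easy reverse inclusion noted above, this yields $I(S(\mathcal{E}))=I(S_{\mathcal{A}}(\mathcal{E}))$, the two being identified via the unique extension (rigidity) of the identity on $S(\mathcal{E})$.

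The main obstacle I anticipate is the identification in the middle paragraph: verifying that the $C^*$-product in $I(S(\mathcal{E}))$ (the Choi--Effros product induced by a minimal projection onto $S(\mathcal{E})$) reproduces the original $\mathcal{A}$-valued inner product on $\mathcal{E}$, rather than some a priori different product on a possibly different $C^*$-algebra. This is where one must quote the standard fact that for the Paulsen system of a Hilbert $C^*$-module the induced multiplication in the injective envelope agrees with the inner product and module action (compatibility of the injective envelope of the Paulsen system with the ternary structure of $\mathcal{E}$), which is precisely the reason fullness is imposed in the hypothesis.
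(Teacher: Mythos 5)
Your plan follows essentially the same route as the paper: both reduce the lemma to showing that, via the Choi--Effros product on $I(S(\mathcal E))$, the full inner product forces $\mathcal A$ to embed into the $(2,2)$-corner, so that $S(\mathcal E)\subseteq S_{\mathcal A}(\mathcal E)\subseteq I(S(\mathcal E))$ and minimality/rigidity of the injective envelope finishes the argument. The compatibility step you defer to a ``standard fact'' --- that the induced multiplication reproduces the $\mathcal A$-valued inner product and that $a\mapsto\varphi_2(a)$ is an injective $*$-homomorphism on $\mathcal A$ --- is precisely what the paper carries out in detail using Youngson's theorem and an essential-ideal argument, so your outline is correct and matches the paper's proof.
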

\begin{proof}
There exists a Hilbert space $H$ such that $\mathcal{E}$ and
$\mathcal{A}$ be contained in $ \mathcal{B(H)}$ and therefore
$S(\mathcal{E})$ and $S_\mathcal A(\mathcal E)$ can be considered
as subsets of $\mathcal{B}(\mathcal{H}^2).$ Since
$\mathcal{B}(\mathcal H)$ is a unital injective $C^*$-algebra,
there is an injective envelope $I(S(\mathcal E))$ of $S(\mathcal
E)$ such that $S(\mathcal E)\subset I(S(\mathcal
E))\subset\mathcal{B}(\mathcal{H}^2)$ and a completely contractive
idempotent
$\Phi:\mathcal{B}(\mathcal{H}^2)\rightarrow\mathcal{B}(\mathcal{H}^2)
$ which is completely positive and its image is $
I(S(\mathcal{E}))$ and act identically on $S(\mathcal{E}),$ see
\cite[4.2.7]{Blecher}. Since $\Phi$ is a unital completely
positive map and idempotent, there exist unital completely
positive maps
$\varphi_{i}:\mathcal{B(H)}\rightarrow\mathcal{B(H)}$  for $1\leq
i\leq 2,$ and an idempotent
$\varphi:\mathcal{B(H)}\rightarrow\mathcal{B(H)}$ such that
$\Phi=\begin{bmatrix}
  \varphi_{1} & \varphi \\
  \varphi^* & \varphi_{2}
  \end{bmatrix},$ Apply \cite[Corollary 5.2.2]{Effros} or \cite[2.6.16]{Blecher} for $\Phi$ and projections $p=\begin{bmatrix}
  id_\mathcal{H} & 0 \\
  0 & 0
  \end{bmatrix}$ and $id_{\mathcal{H}^2}-p$.
The injective envelope $I(S(\mathcal{E}))$ is a unital
$C^*$-algebra by the product $\circ_\Phi$ defined by
$u_1\circ_\Phi u_2:=\Phi(u_1u_2)$ for every $u_1,u_2\in
I(S(\mathcal{E}))$ and it has the following block-wise structure
$\begin{bmatrix}
  I_{11}(\mathcal{E}) & I(\mathcal{E})\\
  I(\mathcal{E}) & I_{22}(\mathcal{E})
  \end{bmatrix}$
where $I(\mathcal E)=\varphi(\mathcal{B(H)})$ is the injective
envelope of $\mathcal{E}$ (by \cite[4.4.3]{Blecher} see
\cite[4.4.2]{Blecher}) and
$I_{ii}(\mathcal{E})=\varphi_{i}(\mathcal{B(H)})$ for $i=1,2$ are
injective $C^*$-algebras. Since $I(S(\mathcal E))$ is a unital
$C^*$-algebra, $I_{11}(\mathcal E)$ and  $I_{22}(\mathcal E)$ are
unital $C^*$-algebras and $I(\mathcal{E})$ is a Hilbert
$I_{11}(\mathcal E)$-$I_{22}(\mathcal{E})$-bimodule. By the
assumption $\mathcal E$  is full and for every
$u_1,u_2\in\mathcal{E}$ we have
$$\begin{bmatrix}
0 & 0\\
u_2^* & 0
\end{bmatrix}\circ_\Phi\begin{bmatrix}
0 & u_1\\
0 & 0
\end{bmatrix}=\Phi(\begin{bmatrix}
0 & 0\\
u_2^* & 0
\end{bmatrix}\begin{bmatrix}
0 & u_1\\
0 & 0
\end{bmatrix})=\Phi(\begin{bmatrix}
0 & 0\\
0 & \langle u_2,u_1\rangle
\end{bmatrix})=\begin{bmatrix}
0 & 0\\
0 & \varphi_2(\langle u_2,u_1\rangle)
\end{bmatrix}\in\begin{bmatrix}
0 & 0\\
0 & I_{22}(\mathcal E)
\end{bmatrix},$$
thus $\varphi_2(\mathcal A)\subset I_{22}(\mathcal E).$ Note that
$\varphi_2$ is  a unital completely positive map on
$\mathcal{B(H)},$ which is not necessarily multiplicative, but its
restriction on $\mathcal{A}$ is an isometric and multiplicative
map from $\mathcal A$ into $I_{22}(\mathcal E).$ To show this,
note that $\Phi$ is a completely contractive unital idempotent map
which acts identically on $S(\mathcal E)$, thus for every
$u\in\mathcal E$ and $a\in\mathcal A$ by
   \cite[Theorem 4.4.9 (Youngson)]{Blecher} or \cite[Lemma 6.1.2]{Effros} we have
   $$\Phi(\begin{bmatrix}
0 & u\\
0 & 0
\end{bmatrix}\Phi(\begin{bmatrix}
0 & 0\\
0 & a
\end{bmatrix}))=\Phi(\Phi(\begin{bmatrix}
0 & u\\
0 & 0
\end{bmatrix})\begin{bmatrix}
0 & 0\\
0 & a
\end{bmatrix})=\Phi(\begin{bmatrix}
0 & u\\
0 & 0
\end{bmatrix}\begin{bmatrix}
0 & 0\\
0 & a
\end{bmatrix})=\Phi(\begin{bmatrix}
0 & ua\\
0 & 0
\end{bmatrix})=\begin{bmatrix}
0 & ua\\
0 & 0
\end{bmatrix},$$
therefore, if $\varphi_2(a)=0$ for some $a\in\mathcal A,$ then $ua=0$ for every $u\in\mathcal E,$ thus $a=0.$
 Thus $\varphi_2$ is one to one.
  Let $u\in\mathcal E$ and $a,b\in\mathcal A.$
   Put $T_1:=\begin{bmatrix}
0 & u\\
0 & 0
\end{bmatrix},$ $T_2=\begin{bmatrix}
0 & 0\\
0& a
\end{bmatrix}$ and $T_3=\begin{bmatrix}
0 & 0\\
0 & b
\end{bmatrix}.$
Since $T_1T_2T_3$ and $T_1T_2$ belongs to $S(\mathcal E)$ and
$\Phi$ is identity on $S(\mathcal E)$  then \cite[Theorem 4.4.9
(Youngson)]{Blecher} or \cite[Lemma 6.1.2]{Effros} implies that
\begin{equation*}\begin{split}
T_1\circ_\Phi\Phi(T_2T_3)
&=\Phi(T_1\Phi(T_2T_3))=\Phi(\Phi(T_1)T_2T_3)=\Phi(T_1T_2T_3)=T_1T_2T_3
\\&=\Phi((T_1T_2)T_3)=\Phi(\Phi(T_1T_2)T_3)=\Phi(T_1T_2\Phi(T_3))=T_1T_2\circ_\Phi\Phi(T_3)
\\&=\Phi(T_1T_2)\circ_\Phi\Phi(T_3)=\Phi(\Phi(T_1)T_2)\circ_\Phi\Phi(T_3)=
\Phi(T_1\Phi(T_2))\circ_\Phi\Phi(T_3)
\\&=T_1\circ_\Phi\Phi(T_1)\circ_\Phi\Phi(T_3).
\end{split}\end{equation*}

Therefore for every $u\in\mathcal E$ we have
$u\circ_\Phi(\varphi_2(ab)-\varphi_2(a)\circ_\Phi\varphi_2(b))=0$
which implies that
$\varphi_2(ab)=\varphi_2(a)\circ_\Phi\varphi_2(b)$, because
$\mathcal{E}^*\circ_\Phi\mathcal{E}$ is an essential ideal in
$I_{22}(\mathcal E).$ Therefore the restriction of $\varphi_2$ on
$\mathcal{A}$ is an one to one  $*$-homomorphism and therefore it
is an isometry from $\mathcal{A}$ into $I_{22}(\mathcal E)$, thus
$\mathcal {A}\subset I_{22}(\mathcal E).$ Thus $S_{\mathcal
A}(\mathcal E)\subset I(S(\mathcal E))$ which implies that
$I(S_{\mathcal A}(\mathcal E))=I(S(\mathcal E)).$
\end{proof}


\begin{theorem}\label{t3.7}
A given object $(\mathcal{E,A})\in\mathcal{C}^{1}_{H,C^*}$ is
injective if and only if $\mathcal{A}$ and $\mathcal{E}$ are
injective objects in the category of operator spaces.
\end{theorem}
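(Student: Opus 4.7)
The plan is to pass through the functor $\Sigma$ of Theorem \ref{t3.2} and exploit the identification $I(S_\mathcal{A}(\mathcal{E}))=I(S(\mathcal{E}))$ from Lemma \ref{l3.8}.

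For the forward direction, suppose $(\mathcal{E},\mathcal{A})$ is injective in $\mathcal{C}_{H,C^*}$. By the proof of Lemma \ref{l3.8}, the pair $(I(\mathcal{E}),I_{22}(\mathcal{E}))$ is itself an object of $\mathcal{C}_{H,C^*}$ that contains $(\mathcal{E},\mathcal{A})$ in the sense of Definition \ref{d3.5}: the map $\varphi_2$ embeds $\mathcal{A}$ as a $C^*$-subalgebra of the injective $C^*$-algebra $I_{22}(\mathcal{E})$, $\mathcal{E}\subset I(\mathcal{E})$, and the $\circ_\Phi$-computation in Lemma \ref{l3.8} shows that the $\mathcal{A}$-module action and the $\mathcal{A}$-valued inner product on $\mathcal{E}$ are exactly those inherited from the larger pair. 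Applying injectivity to the identity morphism $(\mathrm{id}_\mathcal{E},\mathrm{id}_\mathcal{A})$ yields an extension morphism $(\Psi,\psi):(I(\mathcal{E}),I_{22}(\mathcal{E}))\to(\mathcal{E},\mathcal{A})$ restricting to the identity. Then $\psi$ is a completely positive retraction of the injective $C^*$-algebra $I_{22}(\mathcal{E})$ onto $\mathcal{A}$, so $\mathcal{A}$ is an injective $C^*$-algebra (equivalently, an injective operator space). Likewise, $\Psi$ is a completely bounded idempotent projection of the injective operator space $I(\mathcal{E})$ onto $\mathcal{E}$, which makes $\mathcal{E}$ an injective operator space.

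For the reverse direction, assume $\mathcal{E}$ and $\mathcal{A}$ are both injective operator spaces. Given an inclusion $(\mathcal{G},\mathcal{C})\subset(\mathcal{F},\mathcal{B})$ and a morphism $(\Phi,\phi):(\mathcal{G},\mathcal{C})\to(\mathcal{E},\mathcal{A})$, I would first use injectivity of $\mathcal{A}$ to extend $\phi:\mathcal{C}\to\mathcal{A}$ to a completely positive map $\psi:\mathcal{B}\to\mathcal{A}$. Fixing faithful embeddings $\mathcal{A}\subset\mathcal{B}(\mathcal{H}_1)$ and $\mathcal{E}\subset\mathcal{B}(\mathcal{H}_1,\mathcal{H}_2)$, Theorem \ref{t2} then furnishes a completely semi-$\psi$-map extension $\Psi_0:\mathcal{F}\to\mathcal{B}(\mathcal{H}_1,\mathcal{H}_2)$ of $\Phi$. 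By Proposition \ref{p3.1}, the pair $(\Psi_0,\psi)$ corresponds to a corner-preserving completely positive map $\tilde{T}:S_\mathcal{B}(\mathcal{F})\to S_{\mathcal{B}(\mathcal{H}_1)}(\mathcal{B}(\mathcal{H}_1,\mathcal{H}_2))$.

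The final and most delicate step, which I expect to be the main obstacle, is to ``project back'' into $S_\mathcal{A}(\mathcal{E})$. I would construct a corner-preserving completely positive map $\Pi:S_{\mathcal{B}(\mathcal{H}_1)}(\mathcal{B}(\mathcal{H}_1,\mathcal{H}_2))\to S_{\mathcal{A}}(\mathcal{E})$ that acts as the identity on $S_{\mathcal{A}}(\mathcal{E})$; then $\Pi\circ\tilde{T}$ is corner-preserving completely positive, and its middle and lower-right components yield, through Proposition \ref{p3.1}, the required morphism $(\Psi,\psi):(\mathcal{F},\mathcal{B})\to(\mathcal{E},\mathcal{A})$ extending $(\Phi,\phi)$. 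The two building blocks of $\Pi$ are the completely positive projection $\mathcal{B}(\mathcal{H}_1)\to\mathcal{A}$ granted by injectivity of $\mathcal{A}$ and the completely contractive projection $\mathcal{B}(\mathcal{H}_1,\mathcal{H}_2)\to\mathcal{E}$ granted by injectivity of $\mathcal{E}$; verifying that a careful combination of these furnishes a genuinely completely positive map on the Paulsen-type operator system is where the module--bimodule compatibility must be checked and is the technical heart of the argument.
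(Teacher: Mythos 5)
Your forward direction is sound and essentially the paper's argument: both use the containment $(\mathcal{E},\mathcal{A})\subset(I(\mathcal{E}),I_{22}(\mathcal{E}))$ from Lemma \ref{l3.8} and apply injectivity to the identity morphism; you then conclude by a retract argument where the paper invokes rigidity of $\mathcal{E}\subset I(\mathcal{E})$ to get $\Phi=id_{I(\mathcal{E})}$, and you handle $\mathcal{A}$ inside the same containment where the paper uses a second containment $(\mathcal{E},I(\mathcal{A}))$ --- these are harmless variations.

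The reverse direction, however, has a genuine gap exactly where you flag ``the technical heart.'' Your strategy is to push $(\Phi,\phi)$ out to a morphism into $(\mathcal{B}(\mathcal{H}_1,\mathcal{H}_2),\mathcal{B}(\mathcal{H}_1))$ via Theorem \ref{t2} and then retract by a corner-preserving completely positive idempotent $\Pi$ onto $S_{\mathcal{A}}(\mathcal{E})$. By Proposition \ref{p3.1}, the existence of such a $\Pi=\begin{bmatrix} id & \pi\\ \pi^* & \theta\end{bmatrix}$ is \emph{equivalent} to the off-diagonal projection $\pi:\mathcal{B}(\mathcal{H}_1,\mathcal{H}_2)\to\mathcal{E}$ being a completely semi-$\theta$-map for the conditional expectation $\theta:\mathcal{B}(\mathcal{H}_1)\to\mathcal{A}$, i.e.\ to $(id_{\mathcal{E}},id_{\mathcal{A}})$ extending to a morphism $(\pi,\theta):(\mathcal{B}(\mathcal{H}_1,\mathcal{H}_2),\mathcal{B}(\mathcal{H}_1))\to(\mathcal{E},\mathcal{A})$ in $\mathcal{C}_{H,C^*}$. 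That is a special case of the very injectivity statement you are proving, and it does not follow from choosing the two projections independently: operator-space injectivity of $\mathcal{E}$ gives \emph{some} completely contractive idempotent onto $\mathcal{E}$ and injectivity of $\mathcal{A}$ gives \emph{some} expectation onto $\mathcal{A}$, but nothing forces the Schwarz-type compatibility $\pi_n(x)^*\pi_n(x)\leq\theta_n(x^*x)$ between two unrelated choices. Example \ref{e3.9} is a warning that $S_{\mathcal{A}}(\mathcal{E})$ need not be an injective operator system even when $(\mathcal{E},\mathcal{A})$ is injective in the category, so $\Pi$ cannot be conjured from general injectivity principles either. (A smaller issue: in your application of Theorem \ref{t2}, $\mathcal{G}$ is only a $\mathcal{C}$-submodule of the $\mathcal{B}$-module $\mathcal{F}$, not a $\mathcal{B}$-submodule as that theorem's hypotheses require.)

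The paper avoids the retraction entirely. After extending $\phi$ to $\psi:\mathcal{C}\to\mathcal{A}$ by injectivity of $\mathcal{A}$, it forms the single unital completely positive map $\Lambda=\begin{bmatrix} id & \Phi\\ \Phi^* & \psi\end{bmatrix}:S_{\mathcal{C}}(\mathcal{W})\to S_{\mathcal{A}}(\mathcal{E})\subset I(S(\mathcal{E}))$ and extends it to $\Theta:S_{\mathcal{C}}(\mathcal{V})\to I(S(\mathcal{E}))$ using injectivity of the $C^*$-algebra $I(S(\mathcal{E}))$ from Lemma \ref{l3.8}. The diagonal corner of $\Theta$ is already all of $\psi$ (the corner $\mathcal{C}$ was not enlarged), and the only genuinely new data, the off-diagonal corner, lands in $I(\mathcal{E})=\mathcal{E}$ precisely because $\mathcal{E}$ is injective; Proposition \ref{p3.1} then converts $\Theta$ back into the desired morphism. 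I would redo your reverse direction along these lines rather than trying to construct $\Pi$.
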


\begin{proof} Let $(\mathcal{E,A})$ be
an injective element in the category $\mathcal{C}_{H,C^*}^1$. By
Lemma \ref{l3.8}, $(\mathcal{E,A})$ is contained in $(I(\mathcal
E),I_{22}(\mathcal E)).$ Thus the identity morphism
$(id,id):(\mathcal{E,A})\rightarrow(\mathcal{E,A})$ has an extension
to a morphism $(\Phi,\phi):(I(\mathcal E),I_{22}(\mathcal
E))\rightarrow(\mathcal{E,A})$. Thus $\phi:I_{22}(\mathcal
E)\rightarrow \mathcal A$ is a completely positive map which extends
the identity map on $\mathcal A$ thus it is unital and
$\Phi:I(\mathcal E)\rightarrow\mathcal E$ is a completely
semi-$\phi$-map  and therefore it is completely contractive. On the
other hand, the inclusion of $\mathcal E$ in $I(\mathcal E)$ is
rigid and $\Phi|_\mathcal{E}=id_\mathcal{E}$, therefore
$\Phi=id_{I(\mathcal E)},$ by \cite[Theorem 6.1.2]{Effros}.
 Thus $\mathcal E=I(\mathcal
E)$, and $\mathcal E$ is injective.

Similarly, using the fact that $(\mathcal {E,A})$ is contained in
$(\mathcal{E},I(\mathcal A))$ we can show that $\mathcal
A=I(\mathcal A)$ and hence $A$ is injective.

Conversely, assume $\mathcal{E}$ and $\mathcal A$ are injective
operator spaces and $\mathcal E$ is a full right Hilbert $\mathcal
A$-module. We show that $(\mathcal{E,A})$ is an injective object
in $\mathcal{C}_{H,C^*}^1.$
 Let $(\mathcal{W, B}),(\mathcal{V, C})\in \mathcal{C}_{H,C^*}^1$ and $(\mathcal{W,B})$ contained in $(\mathcal{V,C})$.
  Assume $(\Phi,\phi)$ is a morphism from $(\mathcal {W,B})$ into $(\mathcal{E,A})$.
Since $\mathcal A$ is an injective $C^*$-algebra, there is a
unital completely positive map
$\psi:\mathcal{C}\rightarrow\mathcal A$ extending $\phi.$
    Note that  $\mathcal B\subset\mathcal C$, thus we can consider $\mathcal W$ as a Hilbert
     $\mathcal C$-module and $\Phi$ is a completely semi-$\psi$-map.
      Thus the map
      $\Lambda:=\begin{bmatrix}
      id & \Phi \\
      \Phi^* & \psi
      \end{bmatrix}:S_\mathcal{C}(\mathcal W)\rightarrow S_\mathcal{A}(\mathcal E)$
    is a unital completely positive map (by Proposition \ref{p3.1}).
     On the other hand
      $S_\mathcal{A}(\mathcal E)\subset I(S_\mathcal{A}(\mathcal E))=I(S(\mathcal E))=\begin{bmatrix}
      I_{11}(\mathcal E) & I(\mathcal E)\\
      I(\mathcal E) & I_{22}(\mathcal E)\end{bmatrix}.$
Note that $S_\mathcal{C}(\mathcal W)\subset S_\mathcal{C}(\mathcal
V)$ and $I(S(\mathcal E))$ is injective, thus there is a unital
completely positive map $\Theta:S_\mathcal{C}(\mathcal
V)\rightarrow I(S(\mathcal E))$ which extends $\Lambda.$
 It is obvious that $\Theta$ has the matrix decomposition form $\begin{bmatrix}
      id & \Psi \\
      \Psi^* & \psi
      \end{bmatrix}$
for some linear map $\Psi:\mathcal{V}\rightarrow I(\mathcal E),$
but $\mathcal E$ is injective, thus $I(\mathcal E)=\mathcal E$
and, there exists a linear map
$\Psi:\mathcal{V}\rightarrow\mathcal E$ such that $\begin{bmatrix}
      id & \Psi \\
      \Psi^* & \psi
      \end{bmatrix}:S_\mathcal{C}(\mathcal V)\to S_\mathcal{A}(\mathcal E)$
is a unital completely positive map. Now Proposition \ref{p3.1}
implies that $\Psi$ is a completely semi-$\psi$-map, extending
$\Phi.$ Therefore $(\mathcal{E,A})$ is an injective object in
$\mathcal{C}_{H,C^*}^1.$
\end{proof}
Note that $\mathcal{C}_{H,C^*}^1$ is different from the category
of operator spaces, we show this by an example of an injective
object in $\mathcal{C}_{H,C^*}^1$ which its corresponding object
is not a injective operator space.
\begin{ex}\label{e3.9}
Assume $\mathcal H$ be a infinite dimensional Hilbert space. Put
$\mathcal E=\mathcal{B}(\mathbb{C},\mathcal H).$ By Theorem 2.4 or
theorem 3.7 $(\mathcal{B}(\mathbb{C},\mathcal H),\mathbb{C})$ is an
injective object in $\mathcal{C}_{H,C^*}$ and
$\mathcal{C}_{H,C^*}^1$. But $S_{\mathbb{C}}(\mathcal E)=S(\mathcal
E)$ and $I(S(\mathcal E))=\begin{bmatrix}
 \mathcal{B(H)} & \mathcal{E}\\
 \mathcal{E}^* & \mathbb{C}
 \end{bmatrix}.$ Thus $S(\mathcal E)$ is not an injective operator system.
 \end{ex}

\subsection*{Acknowledgment}
The research of the first author was in part supported by a grant
from IPM (No. 94470046).


\end{document}